\newcommand\numberthis{\addtocounter{equation}{1}\tag{\theequation}}
\DeclareMathOperator*{\argmin}{arg\,min}
\title[\sf Strong convergence in forward-backward splitting]{On the strong convergence of forward-backward splitting in reconstructing jointly sparse signals}
\author[\sf N.~Dexter]{{\sf Nick~Dexter}$^\star$}
\address{$^\star$\sf Department of Mathematics, Simon Fraser University, Burnaby, BC V5A 1S6, Canada ({\tt nicholas\_dexter@sfu.ca}).}
\author[\sf H.~Tran]{\sf Hoang~Tran$^\dagger$}
\address{$^\dagger$\sf Department of Computational and Applied Mathematics, Oak Ridge National Laboratory \\
Oak Ridge TN 37831-6164 ({\tt tranha@ornl.gov}).}
\author[\sf C.~G.~Webster]{\sf Clayton G.~Webster$^\ddag$}
\address{$^\ddag$\sf 
Oden Institute for Computational Engineering \& Sciences The University of Texas at Austin,
Austin, Texas 78712, and 
Behavioral Reinforcement Learning Lab, Lirio LLC., Knoxville, TN 37923
({\tt claytongwebster@utexas.edu}).}
\date{\today}
\keywords{Compressed sensing, joint sparsity, strong convergence, convex minimization, incomplete data, infinite vectors, mixed norm relaxation, forward-backward splitting, linear inverse problems, parameterized PDEs}
\begin{document}

\maketitle
\begin{abstract}
We consider the problem of reconstructing an {\em infinite} set of sparse, finite-dimensional vectors, that share a common sparsity pattern, from incomplete measurements. This is in contrast to the work \cite{Daubechies2004}, where the single vector signal can be infinite-dimensional, and \cite{FornasierRauhut08}, which extends the aforementioned work to the joint sparse recovery of \textit{finite} number of {infinite}-dimensional vectors.  In our case, to take account of the joint sparsity and promote the coupling of nonvanishing components, we employ a convex relaxation approach with mixed norm penalty $\ell_{2,1}$.
This paper discusses the computation of the solutions of linear inverse problems with such relaxation by a forward-backward splitting algorithm.   However, since the solution matrix possesses infinitely many columns, the arguments of \cite{Daubechies2004} no longer apply.  As such, we establish new strong convergence results for the algorithm, in particular when the set of jointly sparse vectors is infinite. 
\end{abstract}

\section{Introduction}
\label{sec:intro}

 Sparse signal reconstruction seeks to solve many ill-posed problems arising in source separation, denoising, and compressed sensing \cite{CRT06,Donoho06} by exploiting the additional sparsity constraint. In the basic model, the signal i
 s an unknown vector ${\bm c} \in \mathbb{R}^{N}$, and the sensing process yields a measurement vector ${\bm u} \in \mathbb{R}^m$ that is formed by the product of ${\bm c}$ with a sensing matrix, i.e., ${\bm u} = {\bm A} {\bm c}$, where ${\bm A} \in \mathbb{R}^{m\times N}$. The key observation is that when the signal ${\bm c}$ is sufficiently sparse, it can still be uniquely determined from an underdetermined set of measurements ($m< N$). To overcome the NP-hardness of directly finding the sparsest ${\bm c}$ consistent with a given measurement, various greedy and convex relaxation strategies have been proposed and demonstrated, both empirically and theoretically, to have good reconstruction performance in a range of settings. 

In parallel to developments in sparse signal models, many application scenarios have motivated research interest in processing not just a single signal, but many signals or channels at the same time. In such scenarios, these signals not only possess sparse representations individually, but can also share common sparsity patterns. The problem of simultaneous recovery of jointly sparse signals from incomplete measurements has been referred to as multichannel sparse recovery, joint sparse recovery, simultaneous sparse approximation or the multiple measurement vector (MMV) problem \cite{CREK05,ChenHuo06,TroppGilbertStrauss06,Tropp06,FornasierRauhut08,MishaliEldar08,GRSV08,EldarRauhut10,BergFriedlander10,DaviesEldar12,LBJ12}. Some well-known applications can be found in, for instance, neuroimaging \cite{GGR95,GorodnitskyRao97,PLM97}, DNA microarrays \cite{EricksonSabatti05,PVMH08}, and sensor networks \cite{BaronWakin05proc,WakinSarvotham05}. Recently, the joint sparse recovery problem also arises in the approximation of parameterized partial differential equations (PDEs) modeling physical systems with uncertain inputs. As this application has not drawn much attention so far, we include a detailed discussion in Section \ref{subsec:parameterized_PDE}. 

This paper is concerned with the simultaneous recovery of a collection of sparse signals $\{{\bm c}^{(r)}\in \mathbb{R}^N: r\in \mathbb{N}\}$, given infinitely many measurements 
\begin{align}
\label{intro:eq1}
{\bm u}^{(r)} = {\bm A} {\bm c}^{(r)} + {\bm e}^{(r)}, \ \ \forall r \in \mathbb{N}. 
\end{align}
Here, ${\bm u}^{(r)} \in \mathbb{R}^m$ are the measurement vectors, ${\bm A} \in \mathbb{R}^{m\times N}$ is a predefined sampling matrix and ${\bm e}^{(r)} \in \mathbb{R}^m$ are unknown noise vectors. Grouping separate signals, measurements, and noise vectors into the matrices 
\begin{align*}
{\bm c} &= [{\bm c}^{(1)} \ {\bm c}^{(2)} \  \ldots \ {\bm c}^{(r)}  \ldots \ ] \in \mathbb{R}^{N \times \mathbb{N}},
\\
{\bm u} &= [{\bm u}^{(1)} \ {\bm u}^{(2)} \ldots \ {\bm u}^{(r)} \ldots \ ]  \in \mathbb{R}^{m\times \mathbb{N}}, \ \ 
{\bm e} = [{\bm e}^{(1)} \ {\bm e}^{(2)} \ \ldots \ {\bm e}^{(r)} \ldots \ ] \in \mathbb{R}^{m\times \mathbb{N}}, 
\end{align*}
the considered problem becomes the reconstruction of the signal matrix ${\bm c}$ from 
\begin{align}
\label{intro:eq2}
{\bm u} = {\bm A} {\bm c} + {\bm e}. 
\end{align}
We assume the signals ${\bm c}^{(r)}$ possess a joint sparsity pattern, under which all except a few rows of ${\bm c}$ are negligible. Then, our goal is to find a sparse matrix solution consistent with \eqref{intro:eq2}.  

For a real matrix ${{\bm x}}$, let us denote by ${{\bm x}}_{i}$ and ${{\bm x}}^{(r)}$ the $i$-th row and $r$-th column of ${{\bm x}}$, respectively. We consider the reconstruction of a sparse matrix ${\bm c}$, given measurement ${\bm u}$ satisfying \eqref{intro:eq2}, via ${\bm x}^{*}$ solving the convex minimization program 
\begin{align}
\label{intro:l_21}
{\bm x}^{*}  = \argmin_{{\bm x} \in \mathbb{R}^{N\times\mathbb{N}}} \|{\bm x}\|_{2,1} + \frac{\mu}{2} \|{\bm A} {\bm x} - {\bm u}\|^2_{2,2}, 
\end{align}
where the matrix norm $\|\cdot\|_{p,q}$ is defined as  
$
\|{{\bm x}}\|_{p,q} = \left(\sum_{i} \|{{\bm x}}_{i}\|_p^q\right)^{1/q},\, \forall 0 < p,q < \infty.
$
Use of the mixed norm $\ell_{2,1}$ in regularization, equivalent to first finding the $\ell_2$ norm (promoting nonsparsity) to rows and then applying the $\ell_1$ norm (promoting sparsity) to the resulting vector, has been known a tractable and efficient approach to recover signal matrix with fewest nonzero rows, just as the $\ell_1$ regularization relaxes and replaces the $\ell_0$ minimization in single sparse approximation. 

This paper studies and analyzes a forward-backward splitting approach to solve \eqref{intro:l_21}. In this context, it is also known as the proximal Landweber method, { the thresholded Landweber method, or the iterative soft-thresholding algorithm (ISTA)}. This approach has been studied intensively in the literature for solving standard, unconstrained $\ell_1$ minimization { or related sparsity-promoting regularized} problems motivated by single signal recovery, and convergence theory exists in many previous works, see, e.g., {\cite{Daubechies2004,Combettes2004,Combettes2005,HYZ08,doi:10.1137/060669498,nutini2018activeset, attouch_convergence_2013,liang_activity_2017,fadili_sensitivity_2018}}. The extension of forward-backward splitting to joint sparse recovery is also quite straightforward. It is indeed possible to obtain many interesting results for joint sparse recovery just by slightly modifying available proofs for single-sparse vector recovery (that is, replacing signal vectors by signal matrices, and the absolute value of vector components by the norm of matrix rows). 
Yet, some important questions still remain open. To date, most of the strong convergence results (for single or simultaneous reconstructions) rely on either the strict convexity of the {fidelity} term or the finite dimension of the signals. 
Some notable exceptions are \cite{Daubechies2004}, where the single vector signal can be infinite-dimensional, and \cite{FornasierRauhut08}, which includes, among others, an extension of the aforementioned work to the joint sparse recovery of \textit{finite} number of {infinite}-dimensional vectors. 
The infinite dimensional setting has also been analyzed recently in \cite{garrigos2020thresholding}. This work however assumes separable regularizer, therefore is more in line with \cite{Daubechies2004} and not applicable to joint sparse recovery.

In this manuscript, our main contribution is a strong convergence 
result for the forward-backward splitting algorithm in a joint sparse recovery scenario where neither the strict convexity nor finite dimensionality assumptions holds. In particular, we consider the simultaneous reconstruction in which the dimension of each signal is finite, but the number of signals is \textit{infinite}. This setting is of mathematical interest and arises in compressed sensing-based approximation of parametric PDEs, discussed in Section \ref{subsec:parameterized_PDE} below. 
Here, as the solution matrix possesses infinitely many columns, the arguments of \cite{Daubechies2004,garrigos2020thresholding} are no longer applicable and our analysis needs to follow a new, completely different path.

\subsection{A motivating example: Parameterized elliptic PDEs}

\label{subsec:parameterized_PDE}

The problem of joint sparse recovery arises in, among others, the approximation of high-dimensional parameterized systems. In these contexts, the target quantities of interest are often associated with the solution of a parameterized PDE of the form: find $u(\cdot,\bm{y}) : \overline{D} \times \mathcal{U} \to \mathbb{R}$ for all ${\bm y} \in \mathcal{U}$ such that  
\begin{equation}
\label{eq:genPDE}
\mathcal{L}(u(\cdot, \bm{y}),\bm{y})=0, \ \ \text{ in }D, 
\end{equation}
where $\mathcal{L}$ is a differential operator defined on a spatio-temporal domain $D$ and $\bm{y}$ is a parameter vector in a high-dimensional tensor product domain $\mathcal{U} \subset \mathbb{R}^d$. One typical goal is to simultaneously approximate the entire parametric solution map $\bm{y} \mapsto u(\cdot, \bm{y}) \in {\mathcal{V}}$ up to a prescribed accuracy with minimal computational cost, where ${\mathcal{V}}$ is the solution space, typically a separable Hilbert space. As this solution map is now well-known to be smooth for a wide class of parameterized PDEs, global polynomial approximation is an appealing approach to solve \eqref{eq:genPDE}. 

Define the index set $[N]:=\{1,2,\ldots, N\}$, and let $\{\Psi_j\}_{j=1}^N$ be fixed orthonormal polynomial basis over $\mathcal{U}$. Let $u_{N} = \sum_{j =1}^N c_{j}(\cdot) \Psi_j({\bm y})$ be the projection of $u$ to the space ${\mathcal{V}} \otimes \text{span}\{\Psi_{j}: j\in[N]\}$. We seek to construct an approximation $u^{*}$ to $u_{N}$ (and thus, to the solution $u$) of the form:
\begin{align}
\label{poly_expans}
u^{*}(\cdot,{\bm y}) = \sum_{j=1}^N c^{*}_{j}(\cdot) \Psi_j({\bm y}),
\end{align}
where $\{c^{*}_{j}\}_{j =1}^N \subset {{\mathcal{V}}}$ are the Hilbert-valued coefficients to be computed. Compressed sensing-based polynomial approximations \cite{RauWard12,Adcock15b,ChkifaDexterTranWebster15} allows the truncation of the expansion \eqref{poly_expans} in a large, not necessarily optimal index set. One first generates $m$ samples ${\bm y}_1, \ldots, {\bm y}_m$ in ${\mathcal{U}}$ independently from the orthogonalization measure associated with $\{\Psi_j\}_{j=1}^N$ and solves the equation \eqref{eq:genPDE} at these samples, to form the normalized measurement matrix and normalized measurement vector
$$
\bm{A} := \left(\frac{1}{\sqrt{m}} \Psi_{j}({\bm y}_i)\right)_{{{1\le i \le m},\, {1\le j \le N}}}
\qquad
{\bm u} := \left(\frac{1}{\sqrt{m}} u(\cdot, {\bm y}_i)\right)_{1\le i \le m}.
$$

Taking into account that the true unknown coefficient ${\bm c} = (c_{j})_{j =1}^N$ approximately solves the linear system 
\begin{align}
\label{und_system}
{\bm u} = \bm{A} {\bm x}, \ \ \ {\bm x}\in {\mathcal{V}}^N,
\end{align}
and further, under reasonable assumptions on $\mathcal{L}$ from \eqref{eq:genPDE} the sequence $\{c_{j}\}_{j=1}^N$ decays fast to $0$ in ${\mathcal{V}}$-norm with a large percentage of its elements being negligible \cite{CDS11,CCS14,TranWebsterZhang14}, it is reasonable to approximate ${\bm c}$ by ${\bm x}^*$, the solution to the regularized problem:
\begin{align}
\label{eq:lV1}
{\bm x}^{*} = \argmin_{{\bm x} \in {{\mathcal{V}}}^N} \|{\bm x}\|_{{\mathcal{V}},1} + \frac{\mu}{2}  \|\bm{A} {\bm x} - {\bm u}\|^2_{{\mathcal{V}},2},
\end{align}
in which the number of samples ($m$) can be significantly less than the size of polynomial subspace ($N$). Here, the norm $\|\cdot\|_{{\mathcal{V}},q}$ is defined for ${\bm c}\in {\mathcal{V}}^N$ as $\|{{\bm c}}\|_{{\mathcal{V}},q} := (\sum_{j=1}^N \|{c}_{j}\|^q_{{\mathcal{V}}})^{1/q}$. This is arguably the most natural extension of the $\ell_1$ minimization approach, traditionally for real and complex signal recovery, to the reconstruction of sparse generalized vectors, each component of which is Hilbert-valued. In-depth analysis and application of \eqref{eq:lV1} in solving parameterized PDEs are conducted in \cite{DexterTranWebster17}.

Let $\{\phi_r\}_{r\in \mathbb{N}}$ be an orthonormal basis of ${\mathcal{V}}$, then $c_{j} \in {\mathcal{V}}$ is uniquely represented as 
$$
c_j  = \sum_{r\in \mathbb{N}} c_{j,r} \phi_r,\ \ \text{with } \ c_{j,r} \in \mathbb{R}. 
$$
Each coefficient $c_{j}$ corresponds to an $\mathbb{R}^{1\times \mathbb{N}}$ vector $(c_{j,1},c_{j,2},\ldots, c_{j,r}, \ldots)^{\top}$, thus, ${\bm c} = (c_{j})_{j=1}^N$ is completely determined by the $\mathbb{R}^{N\times \mathbb{N}}$ matrix ${\widehat{\bm{c}}} = (c_{j,r})_{1\le j\le N,r\in \mathbb{N}}$. 
Furthermore, by Parseval's identity we have $\|\bm{c}\|_{\mathcal{V},q} \equiv \| \widehat{\bm{c}} \|_{2,q}$, so that we may replace $\mathcal{V}$ with $\ell_2$ and identify $\bm{c}$ with $\widehat{\bm{c}}$.
In practice, one may choose to work in a subspace $\mathcal{V}_h$ of $\mathcal{V}$, e.g., conforming finite element spatial discretizations of \eqref{eq:genPDE}.
The convergence results established in this manuscript for reconstructing infinitely many jointly sparse vectors reveal that forward-backward splitting approach enjoys strong convergence in solving the abstract problem \eqref{eq:lV1}, even before such spatial (or temporal) discretization is introduced.

\subsection{Related works}

The study of the convergence properties of the forward-backward algorithm over infinite-dimensional function spaces has a long history, with work by R.~T. Rockafellar, R. J.-B. Wets, and J.-J. Moreau dating back to the 1960's. 
An excellent summary of these results can be found in the commentary of \cite[Chapter 1]{rockafellar2009variational}.
A key challenge in the analysis of the algorithm in such settings is the lack of compactness of infinite dimensional bounded and closed sets, see \cite[Chapters 3 \& 6]{brezis2010functional} for more discussion. Weak convergence may be established in such settings using standard approaches, however strong convergence often requires additional assumptions on the problem formulation. 

There have been several approaches for the joint sparse recovery, many of which are extensions from the single signal recovery. These include greedy methods \cite{CREK05,TroppGilbertStrauss06,GRSV08}, and algorithms based on mixed norm optimization \cite{CREK05,Tropp06,FornasierRauhut08,EldarRauhut10,BergFriedlander10}. A method to reduce the multiple measurement problem to the basic model of a single sparse vector via a random projection that preserves the sparsity structure was proposed in \cite{MishaliEldar08}. It is also possible to improve the joint sparse recovery by exploiting the rank of signal matrix, see \cite{DaviesEldar12}, as well as by formulating and solving the problem as a nonconvex optimization problem on manifolds, \cite{PetrosyanTranWebster18}.    

The properties and computation of the solutions of problem \eqref{intro:l_21} (and related variants) have been studied in several previous works. For instance, a FOCUSS algorithm was developed in \cite{CREK05} for $\ell_{2,q}$ penalty with $q\in (0,1]$, and shown by numerical tests to converge to a sparse solution. \cite{Tropp06} established sufficient conditions under which the $\ell_{\infty,1}$ regularization computes sparse solutions to simultaneous approximation problems. In \cite{FornasierRauhut08}, a double-minimization scheme is proposed to model joint sparsity, the first step of which is a forward-backward splitting algorithm to solve $\ell_{p,1}$ regularized problem ($1\le p \le \infty$). Strong convergence of this algorithm was proved. Further progresses in this topics were made in \cite{BergFriedlander10,EldarRauhut10}, where the benefit of simultaneous reconstruction with mixed norm $\ell_{2,1}$ over sequential reconstruction of each signal vectors was analyzed.   

A problem closely related to joint sparse recovery is block sparse recovery, \cite{EldarMishali09,SPH09,EKB10,BCDH10}, where the output vectors are acquired via different sampling matrices, i.e., ${\bm A}$ is replaced by ${\bm A}^{(r)}$ in \eqref{intro:eq1}. While this problem is not discussed in detail herein, we expect that our convergence result can be extended to the block sparse setting with slight modifications. Finally, sparse representation over infinite spaces has also been studied in the context of kernel methods, see \cite{koppel2019parsimonious}.

\subsection{Organization} Our paper is organized as follows. In Section \ref{sec:setting}, we describe the forward-backward splitting scheme for joint sparse recovery and provide necessary background properties. Our main convergence results will be presented in Section \ref{sec:algorithms}. The concluding remarks can be found in Section \ref{sec:conclusion}. 

\section{Forward-backward splitting algorithm for joint sparse recovery}
\label{sec:setting}

In this section, we present a forward-backward splitting algorithm for solving the optimization problem \eqref{intro:l_21}, which can be written in detail as
\begin{equation}
\label{eq:optimization_problem}
\min_{\bm x = (\bm x_1,\ldots,\bm x_N)\in\mathbb{R}^{N\times \mathbb{N}}} \sum_{i=1}^N \|\bm x_i\|_2 + \frac{\mu}{2} \sum_{i=1}^m \| (\bm A \bm x)_i - \bm u_i \|_2^2,
\end{equation}
where $\mu>0$, $\bm A$ is an $m\times N$ matrix, $\bm u=(\bm u_1,\ldots,\bm u_m)\in \mathbb{R}^{m\times\mathbb{N}}$ and $i$ denotes the index of the matrix rows.
For simplicity, we assume $\mu = 1$ in \eqref{eq:optimization_problem}, noting that all of the analysis to follow holds in the case of arbitrary $\mu$. Let us define $\mathcal{H} = \{{\bm x} \in \mathbb{R}^{N\times \mathbb{N}} : \|{\bm x}\|_{2,2} < \infty\}$, we also assume ${\bm u} \in {\mathcal{H}}$, so that \eqref{eq:optimization_problem} has solutions in ${\mathcal{H}}$. Let 
$$
\phi_1({\bm x}) = \|{\bm x}\|_{2,1}\text{ and }\phi_2({\bm x}) = \frac{1}{2} \| {\bm A} {\bm x} - {\bm u} \|_{2,2}^2.
$$
Then $\phi = \phi_1 + \phi_2$ represents a splitting of the objective of \eqref{eq:optimization_problem} into the non-differentiable and Fr\'{e}chet differentiable parts, $\phi_1$ and $\phi_2$, respectively.
Define $T_1 = \partial \phi_1$, $T_2 = \partial \phi_2 = \{\nabla \phi_2\}$, and $T = \partial \phi = T_1 + T_2$,  
 the solutions ${\bm x}^* \in {\mathcal{H}}$ of \eqref{eq:optimization_problem} are characterized by
\begin{align}
\label{eq:X_star}
\bm{0} \in \partial\phi_1({\bm x}^*) + \{\nabla \phi_2({\bm x}^*)\},\ \ \text{ or }\ \ \bm{0} \in (T_1+T_2)({\bm x}^*),
\end{align}
where $\partial \phi_1$ represents the subdifferential of $\phi_1$: for all ${\bm x}\in {\mathcal{H}}$,
\begin{align}
\label{eq:phi_1_subdifferential}
\partial \phi_1({\bm x}) = \{{\bm v}\in {\mathcal{H}} : \langle {\bm v}' - {\bm x}, {\bm v} \rangle_{2,2} + \phi_1({\bm x}) \le \phi_1({\bm v}'), \;\; \forall {\bm v}'\in {\mathcal{H}} \}.
\end{align} 
Let $X^* := \{{\bm x}\in {\mathcal{H}}: \bm{0} \in (T_1+T_2)({\bm x}) \}$, we aim to find ${\bm x}^* \in X^*$ via a formulation of the forward-backward splitting algorithm \cite{Bruck1975,Goldstein1964,Chen1997}, which makes use of the splitting of $T$ into $T_1$ and $T_2$, derived in the setting of joint-sparse recovery.
The forward-backward algorithm is a two-step fixed-point algorithm that involves an explicit (forward) step composed with an implicit (backward) step.
It is efficient, in that it only involves alternating steps requiring relatively cheap computations, using the functions $T_1$ and $T_2$ separately.
In this way it avoids direct computation of $(T_1+T_2)^{-1}(\bm{0})$, which may not be feasible. 

The algorithm can be derived as follows. Let $\tau > 0$, then from \eqref{eq:X_star} we have
\begin{align*}
\bm{0} \in T({\bm x}) 
                  & \iff {\bm x} = (I + \tau T_1)^{-1} (I - \tau T_2) {\bm x}. \numberthis \label{eq:forward_backward_derivation}
\end{align*}
\eqref{eq:forward_backward_derivation} is well-defined, since, as we shall see, $(I-\tau T_2)$ is single-valued and $(I+\tau T_1)$ is invertible. The last identity in \eqref{eq:forward_backward_derivation} leads to the {\em forward-backward splitting} algorithm: 
given initial guess ${\bm x}^{0}\in {\mathcal{H}}$, compute
\begin{equation}
\label{eq:forward_backward_iteration}
{\bm x}^{k+1} = (I + \tau T_1)^{-1} (I - \tau T_2) {\bm x}^k, 
\end{equation}
where ${\bm x}^{k}$ denotes the approximation at $k$-th iterate. We note that the forward-backward algorithm derived here differs from those in many previous works in that it applies soft-thresholding to a target signal whose components can be elements of an infinite-dimensional space, for example, Hilbert-valued (see also Section \ref{subsec:parameterized_PDE}). As opposed, previous derivations only consider a single or a finite collection of real or complex-valued signals. Certainly, one can consider to extend other well-known sparse and joint sparse algorithms, such as Douglas-Rachford splitting \cite{Combettes2004} and  Alternating Direction Method \cite{QinGoldfarb12, 10.1117/12.2024410}, to this scenario. We leave the derivation and analysis of such extensions to future works.

Let us define 
$$
J_\tau := (I + \tau T_1)^{-1},\ \ G_\tau := (I - \tau T_2),\ \ \text{ and }\ S_\tau := J_\tau \circ G_\tau,
$$
then \eqref{eq:forward_backward_iteration} can be written as ${\bm x}^{k+1}  = S_\tau ({\bm x}^{k})$, and it is clear from \eqref{eq:X_star} and \eqref{eq:forward_backward_derivation} that $X^*$ is the set of fixed points of $S_\tau$. 

It is standard to derive the following formulations for $G_{\tau}$ and $J_{\tau}$:
\begin{align}
\label{eq:G_tau}
G_\tau({\bm x}) & = {\bm x} - \tau {\bm A}^*({\bm A}{\bm x} - {\bm u}), \\
\label{eq:J_tau}
(J_{\tau}({\bm x}))_j & = \frac{{\bm x}_{j}}{\|{\bm x}_{j}\|_2} \cdot \max \{ \|{\bm x}_{j} \|_2 - \tau, 0\}, \qquad 1\le j\le N. 
\end{align}
One can observe that the forward operator $G_\tau$ resembles a step of gradient descent algorithm with stepsize $\tau$ for minimizing $\phi_2$. The backward operator $J_{\tau}$, on the other hand, is a soft  thresholding step associated with proximal point method. As a result, algorithm \eqref{eq:forward_backward_iteration} can be considered as an instance of the {\em proximal-gradient} method. 

Our analysis is conducted under the following assumption, which states that ${\bm A}^*{\bm A}$ (the Hessian of $\phi_2$) has bounded spectral norm, and that the step size $\tau$ is chosen appropriately with respect to its spectral radius. 
\begin{assumption}
\label{ass:tau_bounds}
Let ${\bm H} := {\bm A}^*{\bm A}$ and $\|{\bm H}\|_2 < +\infty$, we choose the step size $\tau$ in \eqref{eq:forward_backward_iteration} satisfying $0 < \tau < 2/\|{\bm H}\|_2$.
\end{assumption}

Under this assumption, we observe that $G_\tau$ is nonexpansive, i.e., satisfies
\begin{align}
\label{eq:G_tau_FNE}
\| G_\tau ({\bm v}) - G_\tau ({\bm w}) \|_{2,2} \le \| {\bm v} - {\bm w} \|_{2,2} \qquad \forall {\bm v},{\bm w}\in {\mathcal{H}},
\end{align}
Moreover, following arguments from \cite[Chapter 4]{Bauschke2011}, one can show that $J_\tau$ is row-wise firmly nonexpansive, i.e.,
  \begin{gather}
  \label{eq:J_tauj_FNE}
\begin{aligned}
 \| (J_{\tau} ({\bm v}))_j - (J_{\tau} ({\bm w}))_j \|_{2}^2 \le \| {\bm v}_j - {\bm w}_j \|_{2}^2 - \| ((I - J_{\tau}) {\bm v})_j & - ((I - J_{\tau}) {\bm w})_j \|_{2}^2,
\\
&  \forall {\bm v}, {\bm w} \in {\mathcal{H}},\, \forall j\in [N]. 
\end{aligned}
\end{gather}
These properties are essential for our convergence proofs. We remark that if further assumptions are imposed on ${\bm H}$ to make $G_{\tau}$ a contraction on the whole (or certain subspaces of) ${\mathcal{H}}$, the desired strong convergence can be obtained via a routine manner from classical theory (see discussion in Section \ref{sec:lin_convergence}). It is also possible to prove strong convergence in joint sparse recovery with the nonexpansiveness properties \eqref{eq:G_tau_FNE} and \eqref{eq:J_tauj_FNE} by slightly extending the available arguments in single vector recovery, e.g., \cite{HYZ08}, given the set of measurements being finite, i.e., having $K<\infty$ measurements (in which case $\mathbb{R}^{N\times K}$ is locally compact). However, we stress that our below analysis requires neither of these additional assumptions.  

\section{Convergence results}
\label{sec:algorithms}
In this section, we present our main results 
showing that the sequence $\{{\bm x}^{k}\}$ obtained by iterating \eqref{eq:forward_backward_iteration} converges strongly to an element ${\bm x}^*\in X^*$ from any initial guess ${\bm x}^{0} \in {\mathcal{H}}$. Denote ${\mathcal{H}}^{0} = \{{\bm z} \in \mathbb{R}^{\mathbb{N}}: \|{\bm z}\|_2 <\infty \}$, and let ${\mathcal{P}}_\tau$ be the metric projection from ${\mathcal{H}}^0$ onto $B_2(\bm{0},\tau)$, i.e., for ${\bm z}\in {\mathcal{H}}^0$,
\begin{align}
\label{eq:tau_ball_metric_projection}
{\mathcal{P}}_\tau ({\bm z}) = \begin{cases}
\tau \frac{{\bm z}}{\|{\bm z}\|_{2}}, & {\bm z} \not \in B_{2}(\bm{0},\tau), \\
{\bm z},                            & {\bm z} \in B_{2}(\bm{0},\tau),
\end{cases}
\end{align}
so that $J_{\tau}$ can be represented row-wise as 
$
(J_{\tau}({\bm x}))_j = (I - {\mathcal{P}}_{\tau}) ({\bm x}_j)$ for $\bm{x}\in\mathcal{H}$.

Our analysis relies on the following partition of the index set $[N]$, inspired by \cite[Definition 4.3]{HYZ08}, for the joint-sparse recovery setting.
\begin{definition}
\label{def:L_E_definitions}
For ${\bm x}^* \in X^*$, we define
\begin{align*}
L & := \{ j\in [N] : \| (\nabla \phi_{2}({\bm x}^*))_j \|_{2} < 1 \}, \quad E:= \{ j\in [N]: \| (\nabla \phi_{2}({\bm x}^*))_j \|_{2} = 1 \}, \numberthis \label{eq:L_E_definitions} \\
\omega & := \min_{j\in L} \ \tau(1 - \| (\nabla \phi_{2}({\bm x}^*))_j \|_{2}) . \numberthis \label{eq:omega_def}
\end{align*}
\end{definition}
The intuition for this partition can be derived as follows.
It is easy to see from the definition of $X^*$ and the subdifferential of $\phi_1$ that
\begin{align*}
{\bm x}^* \in X^* 
\iff (\nabla \phi_{2}({\bm x}^*))_j \in 
\begin{cases}
\{-{\bm x}^*_j/\|{\bm x}^*_j\|_{2}\}, & {\bm x}^*_j \neq \bm{0}, \\
B_{2}(\bm{0},1),           & {\bm x}^*_j = \bm{0} ,
\end{cases}
\quad \forall j\in [N],
\end{align*}
implying that $\| (\nabla \phi_{2}({\bm x}^*))_j \|_2 \le 1$ for all $j\in [N]$ and ${\bm x}^*\in X^*$.
Hence $L\cup E = [N]$, and 
\begin{align}
\label{eq:supp_of_solns}
\textnormal{supp}({\bm x}^*) \subseteq E,\;\; \textnormal{ and } \;\; L \subset (\textnormal{supp}({\bm x}^*))^c.
\end{align}
Furthermore, if ${\bm x}^* \in X^*$ and $j\in L$, 
\begin{align*}
\tau(1 - \| (\nabla \phi_{2}({\bm x}^*))_j \|_2) 
    = \tau - \|{\bm x}_j^* - \tau (\nabla \phi_{2}({\bm x}^*))_j \|_2 = \tau - \| (G_{\tau} ({\bm x}^*))_j \|_2,
\end{align*}
therefore  

\begin{equation}
\label{eq:omega_alt_def}
\omega =  \min_{j \in L} \,\left(\tau - \| (G_{\tau} ({\bm x}^*))_j \|_2\right) > 0.
\end{equation}
Using this partition of $[N]$, our strong convergence result is obtained in three steps:

\begin{enumerate}

\item Establish finite convergence on the set $L$, i.e., determine a bound on the number of iterations $K$ such that ${\bm x}^{k}_j = {\bm x}^*_j = \bm{0}$ for each $j\in L$ whenever $k\ge K$. Therefore, in light of \eqref{eq:supp_of_solns}, the iterations \eqref{eq:forward_backward_iteration} partially identify the support of an element ${\bm x}^*\in X^*$ after a finite number of iterations (Lemma \ref{lem:finite_convergence}),

\item Establish angular convergence: $\theta_j^{k}\to 0$ as $k\to\infty$, {where, for $j\in \textnormal{supp}({\bm x}^*) \subseteq E$, $\theta_j^k$ is the angle between ${\bm x}^{k}_j$ and ${\bm x}^*_j$, and, for $j\in E\setminus \textnormal{supp}({\bm x}^*)$, $\theta_j^k$ is the angle between ${\mathcal{P}}_\tau ((G_{\tau}({\bm x}^k))_j)$ and ${\mathcal{P}}_\tau( (G_{\tau}({\bm x}^*))_j)$} (Theorem \ref{thm:angular_convergence}),

\item Combine the available weak convergence result, e.g., \cite{Combettes2004}, with our angular convergence to obtain the convergence in norm on $E$ (Theorem \ref{thm:strong_convergence}).
\end{enumerate}

First, we state without proof a few supporting, known results. From the firm nonexpansiveness of $J_\tau$ and nonexpansiveness of $G_\tau$, following the arguments in \cite[Lemma 4.1]{HYZ08}, we can show that the sets $L$ and $E$ defined above are invariant on $X^*$, thereby justifying the use of Definition \ref{def:L_E_definitions} in studying the convergence of $\{{\bm x}^k\}$ to an arbitrary element ${\bm x}^*\in X^*$.

\begin{lemma}
\label{lem:fp_condition}
Under Assumption \ref{ass:tau_bounds}, for every ${\bm x},{\bm x}'\in X^*$, ${\bm H} {\bm x} = {\bm H} {\bm x}'$, and hence $\nabla \phi_2({\bm x}) = \nabla \phi_2({\bm x}')$.
\end{lemma}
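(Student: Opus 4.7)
The plan is to exploit the convexity of the objective $\phi = \phi_1 + \phi_2$ and the strict convexity of the squared residual in the variable $\bA\bx$. The key observation is that $X^*$, as the set of minimizers of a convex functional, is itself convex; in particular, for any $\bx, \bx' \in X^*$ the midpoint $\tfrac{1}{2}(\bx + \bx')$ lies in $X^*$ and attains the minimum value $\phi^* = \phi(\bx) = \phi(\bx')$.

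First I would combine this with the convexity inequalities
\begin{align*}
\phi_1\bigl(\tfrac{1}{2}(\bx+\bx')\bigr) &\le \tfrac{1}{2}\phi_1(\bx) + \tfrac{1}{2}\phi_1(\bx'),\\
\phi_2\bigl(\tfrac{1}{2}(\bx+\bx')\bigr) &\le \tfrac{1}{2}\phi_2(\bx) + \tfrac{1}{2}\phi_2(\bx'),
\end{align*}
and note that summing them yields $\phi^* \le \phi^*$. Consequently, both inequalities must hold with equality.

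Next I would extract information from equality in the $\phi_2$ inequality. Setting $\bp := \bA\bx - \bu$ and $\bq := \bA\bx' - \bu$, equality in $\tfrac{1}{2}\|\cdot\|_{2,2}^2 \circ \bA$ applied at the midpoint reduces, after a short parallelogram-style calculation, to $\|\bp - \bq\|_{2,2}^2 = 0$, i.e.\ $\bA\bx = \bA\bx'$. Applying $\bA^*$ gives $\bH\bx = \bA^*\bA\bx = \bA^*\bA\bx' = \bH\bx'$, and since $\nabla\phi_2(\bz) = \bH\bz - \bA^*\bu$, the identity $\nabla\phi_2(\bx) = \nabla\phi_2(\bx')$ follows immediately.

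There is no real obstacle here: the argument is the standard observation that the residual $\bA\bx - \bu$ is invariant across the solution set of an $\ell_1$-regularized least-squares problem, and it requires neither finiteness of $\Omega$ nor Assumption \ref{ass:tau_bounds} (the latter is invoked only to place the statement in the working setting). The only point worth being careful about is that $\phi_2$ is strictly convex as a function of $\bA\bx$ rather than of $\bx$, which is precisely why the equality-in-convexity step delivers $\bA\bx = \bA\bx'$ and not directly $\bx = \bx'$ — but this is exactly what is needed for the conclusion.
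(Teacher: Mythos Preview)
Your argument is correct. The convexity of $X^*$, equality in the midpoint inequality for $\phi_2$, and the parallelogram identity in the Hilbert space $\cH$ indeed force $\bA\bx = \bA\bx'$, from which $\bH\bx = \bH\bx'$ and $\nabla\phi_2(\bx) = \nabla\phi_2(\bx')$ follow. You are also right that Assumption~\ref{ass:tau_bounds} plays no role in your proof.

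The paper does not give its own proof but defers to \cite[Lemma 4.1]{HYZ08}, whose argument proceeds through the fixed-point characterization: since $\bx,\bx'$ are both fixed points of $S_\tau = J_\tau\circ G_\tau$ and $J_\tau,G_\tau$ are nonexpansive, one has $\|\bx-\bx'\|_{2,2} = \|G_\tau(\bx)-G_\tau(\bx')\|_{2,2} = \|(I-\tau\bH)(\bx-\bx')\|_{2,2}$, and equality here, given $0<\tau<2/\|\bH\|_2$, forces $\bx-\bx'$ to lie in $\ker\bH$. That route genuinely uses the step-size restriction to place the spectrum of $I-\tau\bH$ in $(-1,1]$. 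Your variational route is more elementary and more general: it works for any $\tau$ (indeed, independently of the algorithm), and it makes transparent that the result is really a property of the minimization problem \eqref{intro:l_21} rather than of the iteration \eqref{eq:forward_backward_iteration}. The fixed-point argument, on the other hand, fits more naturally with the operator-theoretic machinery used in the rest of the section.
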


Weak convergence of the forward-backward iterations has been well-established in the general case of monotone inclusion problems, see, e.g., \cite[Section 6]{Combettes2004}. Such problems include \eqref{eq:optimization_problem}, and therefore the weak convergence holds in this setting as well. 

\begin{lemma}
\label{lem:weak_convergence}
Let Assumption \ref{ass:tau_bounds} hold and $\{{\bm x}^{k}\}$ be generated by the forward-backward iterations \eqref{eq:forward_backward_iteration} starting from any ${\bm x}^0\in {\mathcal{H}}$. Then $\{{\bm x}^k\}$ converges weakly to some ${\bm x}^*\in X^*$.
\end{lemma}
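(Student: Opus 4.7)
The plan is to recognize iteration \eqref{eq:forward_backward_iteration} as the classical forward-backward splitting method applied to the monotone inclusion $\bm{0}\in T_1(\bx)+T_2(\bx)$ on the real Hilbert space $\cH$, and then invoke the standard weak-convergence theory in \cite[Section 6]{Combettes2004}, exactly as the paragraph preceding the lemma suggests. No new estimate is needed at this stage.

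First I would verify the standing hypotheses of that theory. The space $\cH$ equipped with the inner product $\langle\cdot,\cdot\rangle_{2,2}$ is a real Hilbert space, isometric to $\ell_2([N]\times\Omega)$, and hence reflexive. The penalty $\phi_1=\|\cdot\|_{2,1}$ is proper, convex and strongly lower semicontinuous, so $T_1=\partial\phi_1$ is maximal monotone. The fidelity $\phi_2(\bx)=\tfrac{1}{2}\|\bA\bx-\bu\|_{2,2}^2$ is convex and Fr\'echet differentiable with gradient $\nabla\phi_2(\bx)=\bA^*(\bA\bx-\bu)$, and it satisfies
\begin{equation*}
\|\nabla\phi_2(\bv)-\nabla\phi_2(\bw)\|_{2,2}\le \|\bH\|_2\,\|\bv-\bw\|_{2,2};
\end{equation*}
by the Baillon--Haddad theorem this Lipschitz estimate is equivalent to $T_2=\{\nabla\phi_2\}$ being $(1/\|\bH\|_2)$-cocoercive, which is precisely the regularity demanded of the single-valued operator in the forward-backward theory.

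Next I would argue that $X^*\neq\emptyset$, i.e.\ that $\phi_1+\phi_2$ attains its infimum. Since $\|\bx\|_{2,2}\le\|\bx\|_{2,1}$ (the elementary $\ell_1\hookrightarrow\ell_2$ estimate applied to the sequence of row $\ell_2$-norms), the functional $\phi_1+\phi_2$ is coercive on $\cH$; being convex and weakly lower semicontinuous on the reflexive space $\cH$, the direct method of the calculus of variations produces a minimizer, so $X^*$ is nonempty. Under Assumption \ref{ass:tau_bounds}, $\tau\in(0,2/\|\bH\|_2)$ is exactly the admissible range for forward-backward splitting with an $(1/\|\bH\|_2)$-cocoercive gradient step, and \cite[Theorem 6.4]{Combettes2004} (equivalently the proximal-gradient convergence theorem) immediately yields $\bx^k\rightharpoonup \bx^*$ for some $\bx^*\in X^*$.

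I do not foresee a substantive obstacle here: the only step that is not purely formal is existence of minimizers, which is a textbook application of the direct method via the coercivity of the $\ell_{2,1}$-term. The real work of the paper — upgrading this weak convergence to strong convergence when $\Omega$ is countably infinite — is the subject of Theorem \ref{thm:angular_convergence} and Theorem \ref{thm:strong_convergence}, not of Lemma \ref{lem:weak_convergence} itself.
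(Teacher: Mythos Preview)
Your proposal is correct and follows precisely the route the paper takes: the paper does not prove this lemma at all but simply states that problem \eqref{intro:l_21} is an instance of the monotone inclusion problems covered by \cite[Section 6]{Combettes2004}, from which weak convergence follows directly. Your write-up merely makes explicit the verification of the standing hypotheses (maximal monotonicity of $T_1$, cocoercivity of $T_2$ via Baillon--Haddad, nonemptiness of $X^*$ by coercivity and the direct method) that the paper leaves implicit, so there is nothing to compare.
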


\subsection{Finite convergence in the support complement and angular convergence}

In this section, we show two results key in proving strong convergence. The first says that the rows associated with the set $L\subset (\textnormal{supp}({\bm x}^*))^c$ of $\{{\bm x}^k\}$ converge to $\bm{0}$ in finitely many iterations, thus coinciding with ${\bm x}^*$. 
\begin{lemma}
\label{lem:finite_convergence}
Let Assumption \ref{ass:tau_bounds} hold and $\{{\bm x}^{k}\}$ be generated by the forward-backward iterations \eqref{eq:forward_backward_iteration} starting from any ${\bm x}^{0}\in {\mathcal{H}}$. Then
${\bm x}_j^{k} = {\bm x}_j^* = \bm{0}$\; $\forall j\in L$, for all but at most $\|{\bm x}^{0} - {\bm x}^* \|_{2,2}^2/  \omega^2$ iterations.
\end{lemma}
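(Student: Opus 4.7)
My plan is a monotone-descent argument on $\|\bx^k-\bx^*\|_{2,2}^2$: I aim to show that each iteration at which $\bx_j^{k+1}\ne \bm{0}$ for some $j\in L$ forces a drop of at least $\omega^2$ in this quantity, so at most $\|\bx^0-\bx^*\|_{2,2}^2/\omega^2$ such iterations can occur. Two preliminary observations drive the argument. First, for any $j\in L$, the subdifferential characterization in \eqref{eq:supp_of_solns} gives $\bx_j^*=\bm{0}$, and since $G_{\tau,j}(\bx^*)=\bx_j^*-\tau\nabla\phi_{2,j}(\bx^*)=-\tau\nabla\phi_{2,j}(\bx^*)$, the alternative expression \eqref{eq:omega_alt_def} yields $\|G_{\tau,j}(\bx^*)\|_2\le \tau-\omega$. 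In particular, $G_{\tau,j}(\bx^*)\in B_2(\bm{0},\tau)$, so $\mcP_\tau G_{\tau,j}(\bx^*)=G_{\tau,j}(\bx^*)$.

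Next I would apply the row-wise firm nonexpansiveness \eqref{eq:J_tauj_FNE} of $J_\tau$ with $\bv=G_\tau(\bx^k)$ and $\bw=G_\tau(\bx^*)$, use the identity $I-J_{\tau,j}=\mcP_\tau$ row by row, sum over the finite index set $j\in[N]$, and then invoke the nonexpansiveness \eqref{eq:G_tau_FNE} of $G_\tau$, to obtain
\begin{align*}
\|\bx^{k+1}-\bx^*\|_{2,2}^2 \;\le\; \|\bx^k-\bx^*\|_{2,2}^2 \;-\; \sum_{j\in[N]}\|\mcP_\tau G_{\tau,j}(\bx^k)-\mcP_\tau G_{\tau,j}(\bx^*)\|_2^2.
\end{align*}
Restricting the nonnegative tail to $j\in L$ and using $\mcP_\tau G_{\tau,j}(\bx^*)=G_{\tau,j}(\bx^*)$ from the first paragraph, this reduces to the per-iteration estimate
\begin{align*}
\|\bx^k-\bx^*\|_{2,2}^2 - \|\bx^{k+1}-\bx^*\|_{2,2}^2 \;\ge\; \sum_{j\in L}\|\mcP_\tau G_{\tau,j}(\bx^k)-G_{\tau,j}(\bx^*)\|_2^2.
\end{align*}

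Finally I would lower-bound a single ``bad'' summand. Since $\bx_j^{k+1}=J_{\tau,j}(G_\tau(\bx^k))=(I-\mcP_\tau)G_{\tau,j}(\bx^k)$, having $\bx_j^{k+1}\ne \bm{0}$ for some $j\in L$ forces $\|G_{\tau,j}(\bx^k)\|_2>\tau$, which in turn forces $\|\mcP_\tau G_{\tau,j}(\bx^k)\|_2=\tau$ by the definition \eqref{eq:tau_ball_metric_projection} of $\mcP_\tau$. The reverse triangle inequality then yields
\[
\|\mcP_\tau G_{\tau,j}(\bx^k)-G_{\tau,j}(\bx^*)\|_2 \;\ge\; \tau-\|G_{\tau,j}(\bx^*)\|_2 \;\ge\; \omega .
\]
Thus every bad iteration contributes at least $\omega^2$ to a telescoping sum of nonnegative differences of $\|\bx^k-\bx^*\|_{2,2}^2$, which is bounded above by $\|\bx^0-\bx^*\|_{2,2}^2$; the bound follows. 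The main obstacle I anticipate is not analytic but notational: one must verify that \eqref{eq:J_tauj_FNE} and \eqref{eq:G_tau_FNE} chain correctly once summed row-wise, but this is fine because $[N]$ is finite and the sum of squared row $\ell_2$-norms is exactly $\|\cdot\|_{2,2}^2$, so the possibly infinite index set $\Omega$ never enters the argument.
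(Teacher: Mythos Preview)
Your argument is correct and follows the same overall strategy as the paper: show that any iteration with $\bx_j^{k+1}\ne\bm{0}$ for some $j\in L$ forces a drop of at least $\omega^2$ in $\|\bx^k-\bx^*\|_{2,2}^2$, then telescope. The technical route differs slightly. The paper obtains the drop by a direct computation: when $\|G_{\tau,j}(\bx^k)\|_2>\tau$ it writes $\|\bx_j^{k+1}\|_2=\|G_{\tau,j}(\bx^k)\|_2-\tau$ explicitly, applies the triangle inequality to $\|\bx_j^{k+1}-\bx_j^*\|_2$, squares, and then uses only row-wise \emph{nonexpansiveness} of $J_\tau$ on the remaining rows. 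You instead invoke the row-wise \emph{firm} nonexpansiveness \eqref{eq:J_tauj_FNE} globally to produce the projection-difference term $\sum_j\|\mcP_\tau G_{\tau,j}(\bx^k)-\mcP_\tau G_{\tau,j}(\bx^*)\|_2^2$, and lower-bound one summand by $\omega^2$ via the reverse triangle inequality. This is exactly the inequality \eqref{eq:component-wise_FNE}--\eqref{eq:FNE_iteration_ident} that the paper derives later for Theorem~\ref{thm:angular_convergence}, so your proof is in effect a reorganization that front-loads that machinery; it buys a slightly cleaner bookkeeping (no need to square a one-sided bound), while the paper's version keeps the lemma self-contained using only the weaker nonexpansiveness on the non-bad rows.
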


\begin{proof}
The proof follows the arguments in \cite[Lemma 5.2, part 1]{HYZ08}. Let ${\bm x}^*\in X^*$ and $j\in L$. 
Then ${\bm x}_j^* = \bm{0}$ by \eqref{eq:supp_of_solns}, so that $(S_{\tau} ({\bm x}^*))_j = \bm{0}$. Suppose ${\bm x}_j^{k}\neq \bm{0}$ for some $k\in{\mathbb{N}}_0 := \mathbb{N} \cup \{0\}$.
If $\|(G_{\tau} ({\bm x}^{k}))_j \|_2 \le \tau$, then the result follows after $k+1$ iterations, since ${\bm x}^{k+1}_j = (S_{\tau}({\bm x}^k))_j = { (J_{\tau} (G_{\tau} ({\bm x}^k)))_j} = \bm{0}$ by \eqref{eq:J_tau}.
Otherwise, 
$$\|{\bm x}_j^{k+1}\|_2 = \left\| \left(1 - \frac{\tau}{\| (G_{\tau}({\bm x}^k))_j \|_2} \right) (G_{\tau}({\bm x}^k))_j \right\|_2 = \| (G_{\tau} ({\bm x}^{k}))_j  \|_2 - \tau > 0,$$
implying
\begin{align*}
\|{\bm x}_j^{k+1} - {\bm x}_j^* \|_2 
& = \| (G_{\tau} ({\bm x}^{k}))_j - (G_{\tau} ({\bm x}^*))_j +  (G_{\tau} ({\bm x}^*))_j \|_2 - \tau \\
& \leq 
\| (G_{\tau} ({\bm x}^{k}))_j - (G_{\tau} ({\bm x}^*))_j \|_2 - (\tau - \| (G_{\tau} ({\bm x}^*))_j \|_2).
\end{align*}
Hence, by \eqref{eq:omega_alt_def}, 
\begin{gather*}
\begin{aligned}
\|{\bm x}_j^{k+1} - {\bm x}_j^* \|_2^2 
& < \| (G_{\tau} ({\bm x}^{k}))_j - (G_{\tau} ({\bm x}^*))_j \|_2^2 - (\tau - \| (G_{\tau} ({\bm x}^*))_j \|_2)^2 \\
& \leq \| (G_{\tau} ({\bm x}^{k}))_j - (G_{\tau} ({\bm x}^*))_j \|_2^2 - \omega^2. 
\end{aligned}
\end{gather*}
The row-wise nonexpansiveness of $J_{\tau}$ and the nonexpansiveness of $G_\tau$ implies 
\begin{align}
\|{\bm x}^{k+1} - {\bm x}^*\|_{2,2}^2 & \le \|G_{\tau} ({\bm x}^{k}) -  G_{\tau} ({\bm x}^*)\|_{2,2}^2 -  \omega^2 
  \le \|{\bm x}^{k} - {\bm x}^*\|_{2,2}^2 -  \omega^2. \label{eq_v6:1}
\end{align}
Applying \eqref{eq_v6:1} inductively gives $0\le \|{\bm x}^{k+1} - {\bm x}^*\|_{2,2}^2 \le  \|{\bm x}^{0} - {\bm x}^*\|_{2,2}^2 - k  \omega^2$, 
showing that the number of iterations for which ${\bm x}_j^{k} \neq \bm{0}$ for $j\in L$ satisfies $k \leq \|{\bm x}^{0} - {\bm x}^* \|_{2,2}^2/ \omega^2$.
\end{proof}

Let $\sphericalangle ({\bm z}, {\bm z}')$ denote the angle between two nonzero vectors ${\bm z},{\bm z}' \in{\mathcal{H}}^0$, i.e.,
\begin{align*}
\sphericalangle ({\bm z}, {\bm z}') := \cos^{-1} \left( \frac{\langle {\bm z}, {\bm z}' \rangle_2}{\|{\bm z}\|_2 \|{\bm z}'\|_2} \right),
\end{align*}
when both $\bm{z}$ and $\bm{z}'$ are not equal to $\bm{0}\in\mathcal{H}^0$. Our next result shows the angular convergence properties of the forward-backward algorithm using the firmly nonexpansive property. This is a generalization of \cite[Lemma 5.2, part 2]{HYZ08}, which establishes that the signs of the components of gradient steps $G_{\tau}({\bm x}^k)$ (defined in \eqref{eq:G_tau}) agree to those of $G_{\tau}({\bm x}^*)$ for all but finitely many $k\in{\mathbb{N}}$, in case ${\bm x}^k,{\bm x}^* \in {\mathbb{R}}^{N}$.
In essence, the sign function was used to partition ${\mathbb{R}}^N$; and the difference ${\bm x}^{k+1} - {\bm x}^*$ was proved to reduce a fixed amount from the previous step whenever the signs do not match, yielding that the mismatch can only hold for finitely many steps.
Here, we derive an extended argument directly from the firmly nonexpansive property \eqref{eq:J_tauj_FNE}. 
\begin{theorem}
\label{thm:angular_convergence}
Let Assumption \ref{ass:tau_bounds} hold, ${\bm x}^*\in X^*$, and $\{{\bm x}^{k}\}$ be generated by the forward-backward iterations \eqref{eq:forward_backward_iteration} starting from any ${\bm x}^{0}\in{\mathcal{H}}$. Then 
\begin{enumerate}
\item $\sphericalangle({\bm x}^{k}_j,{\bm x}^*_j)\to 0$ as $k\to\infty$ for each $j\in\textnormal{supp}({\bm x}^*)$, 
\item $\sphericalangle({\mathcal{P}}_\tau ((G_{\tau}({\bm x}^k))_j) ,{\mathcal{P}}_{\tau} ( (G_{\tau}({\bm x}^*))_j)) \to 0$ as $k\to\infty$ for each $j\in E\setminus\textnormal{supp}({\bm x}^*)$.
\end{enumerate}
\end{theorem}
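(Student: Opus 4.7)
The plan is to combine the row-wise firmly nonexpansive property \eqref{eq:J_tauj_FNE} of $J_\tau$ with the nonexpansiveness \eqref{eq:G_tau_FNE} of $G_\tau$ to extract a summability estimate that drives the quantities $\|\mcP_\tau G_{\tau,j}(\bx^k) - \mcP_\tau G_{\tau,j}(\bx^*)\|_2$ to zero, and then to translate this into angular convergence. First, I would apply \eqref{eq:J_tauj_FNE} with $\bv = G_\tau(\bx^k)$ and $\bw = G_\tau(\bx^*)$, use the identity $I - J_\tau = \mcP_\tau$ at the row level (evident from \eqref{eq:J_tau} and \eqref{eq:tau_ball_metric_projection}), sum over $j \in [N]$, and invoke \eqref{eq:G_tau_FNE} to arrive at the one-step descent
\begin{align*}
\|\bx^{k+1} - \bx^*\|_{2,2}^2 \le \|\bx^k - \bx^*\|_{2,2}^2 - \sum_{j \in [N]} \|\mcP_\tau G_{\tau,j}(\bx^k) - \mcP_\tau G_{\tau,j}(\bx^*)\|_2^2.
\end{align*}
Since $\|\bx^k - \bx^*\|_{2,2}^2 \ge 0$ is monotonically nonincreasing, telescoping yields $\sum_{k=0}^{\infty}\sum_{j\in[N]} \|\mcP_\tau G_{\tau,j}(\bx^k) - \mcP_\tau G_{\tau,j}(\bx^*)\|_2^2 \le \|\bx^0 - \bx^*\|_{2,2}^2 < \infty$, so $\mcP_\tau G_{\tau,j}(\bx^k) \to \mcP_\tau G_{\tau,j}(\bx^*)$ in $\|\cdot\|_2$ for every $j \in [N]$.

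Next, I would identify $\mcP_\tau G_{\tau,j}(\bx^*)$ for $j \in E$ using the characterization of $X^*$ recalled before \eqref{eq:supp_of_solns}, namely $\nabla \phi_{2,j}(\bx^*) = -\bx^*_j/\|\bx^*_j\|_2$ on $\textnormal{supp}(\bx^*)$ and $\|\nabla\phi_{2,j}(\bx^*)\|_2 = 1$ on all of $E$. A short calculation then shows
\begin{align*}
\mcP_\tau G_{\tau,j}(\bx^*) =
\begin{cases}
\tau\, \bx^*_j/\|\bx^*_j\|_2, & j \in \textnormal{supp}(\bx^*), \\
-\tau\, \nabla \phi_{2,j}(\bx^*), & j \in E \setminus \textnormal{supp}(\bx^*),
\end{cases}
\end{align*}
so that $\|\mcP_\tau G_{\tau,j}(\bx^*)\|_2 = \tau > 0$ in either case. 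Consequently $\mcP_\tau G_{\tau,j}(\bx^k)$ is eventually nonzero and norm-converges to a nonzero limit, forcing the normalized vectors, and hence the angle between them, to converge to zero; this is exactly part~(2). For part~(1), I would observe that whenever $\bx^{k+1}_j \ne 0$ one has $\|G_{\tau,j}(\bx^k)\|_2 > \tau$, so both $\bx^{k+1}_j = \bigl(1 - \tau/\|G_{\tau,j}(\bx^k)\|_2\bigr)\, G_{\tau,j}(\bx^k)$ and $\mcP_\tau G_{\tau,j}(\bx^k) = \bigl(\tau/\|G_{\tau,j}(\bx^k)\|_2\bigr)\, G_{\tau,j}(\bx^k)$ are positive scalar multiples of $G_{\tau,j}(\bx^k)$; combined with the observation that $\mcP_\tau G_{\tau,j}(\bx^*)$ is a positive scalar multiple of $\bx^*_j$ (from the display above), this gives the angle-preserving identity $\sphericalangle(\bx^{k+1}_j, \bx^*_j) = \sphericalangle(\mcP_\tau G_{\tau,j}(\bx^k), \mcP_\tau G_{\tau,j}(\bx^*))$, whose right-hand side tends to zero by the first paragraph.

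The main technical subtlety I foresee lies in the borderline possibility that $\bx^k_j$ vanishes for infinitely many $k$ when $j \in \textnormal{supp}(\bx^*)$: on such iterations the angle $\sphericalangle(\bx^k_j, \bx^*_j)$ is a priori undefined, and the summability argument alone only gives $\|\mcP_\tau G_{\tau,j}(\bx^k)\|_2 \to \tau$, which is consistent with $\|G_{\tau,j}(\bx^{k-1})\|_2$ approaching $\tau$ from below. Closing this gap will require either a finer quantitative estimate specific to the support case --- for instance, a uniform lower bound on $\|\mcP_\tau G_{\tau,j}(\bx^k) - \mcP_\tau G_{\tau,j}(\bx^*)\|_2$ when $\|G_{\tau,j}(\bx^k)\|_2 \le \tau$ --- or an interpretation of the statement along the cofinite subsequence on which $\bx^k_j$ is nonzero. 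Apart from this point, every step above is uniform in the index set $\Omega$: no finite-dimensionality or compactness of $\cH$ is invoked, which is precisely what enables the argument to cover the infinite-$\Omega$ setting that distinguishes this paper from the classical proofs of \cite{HYZ08}.
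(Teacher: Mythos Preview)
Your proposal is essentially the paper's proof: the same row-wise firm-nonexpansiveness estimate \eqref{eq:J_tauj_FNE} applied to $G_\tau(\bx^k)$ and $G_\tau(\bx^*)$, summed and telescoped against \eqref{eq:G_tau_FNE} to force $\mcP_\tau G_{\tau,j}(\bx^k)\to\mcP_\tau G_{\tau,j}(\bx^*)$ in norm, followed by the same collinearity observation to transfer this to angles.

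The one point you flag as unresolved is closed in the paper by an ingredient you have not yet invoked: the weak convergence of Lemma~\ref{lem:weak_convergence}. For $j\in\textnormal{supp}(\bx^*)$, if $\|G_{\tau,j}(\bx^{k})\|_2\le\tau$ held for infinitely many $k$, then $\bx_j^{k+1}=\bm 0$ along a subsequence; but $\bx_j^k\rightharpoonup\bx_j^*\ne\bm 0$, and every subsequence of a weakly convergent sequence has the same weak limit, so the zero subsequence would force $\bx_j^*=\bm 0$, a contradiction. Hence $\|G_{\tau,j}(\bx^k)\|_2>\tau$ for all but finitely many $k$---precisely the cofinite statement you speculated about---and your angle-preserving identity then applies for all large $k$. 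Your alternative ``uniform lower bound'' route does not work as stated: since $\|\mcP_\tau G_{\tau,j}(\bx^*)\|_2=\tau$ lies \emph{on} the sphere $\partial B_2(\bm 0,\tau)$, points with $\|G_{\tau,j}(\bx^k)\|_2\le\tau$ can approach it arbitrarily closely, so no positive lower bound is available from the summability estimate alone.
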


\begin{proof}
Let ${\bm z},{\bm z}'\in{\mathcal{H}}$ and $j\in[N]$ be arbitrary. Then ${\mathcal{P}}_\tau ({\bm z}_j) = \kappa\, {\bm z}_j$ and $(J_{\tau}({\bm z}))_j = (1-\kappa) {\bm z}_j$ where $\kappa := \min\{\tau/\|{\bm z}_j\|_2, 1\}\in (0,1]$, from \eqref{eq:J_tau} and \eqref{eq:tau_ball_metric_projection}. Hence 
${\bm z}_j$, ${\mathcal{P}}_\tau({\bm z}_j)$, and $(J_{\tau}({\bm z}))_j$ are all collinear.
We observe that collinearity implies
\begin{gather}
\label{eq_v6:2}
\begin{aligned}
\sphericalangle({\bm z}_j,{\bm z}'_j) &= \sphericalangle ( {\mathcal{P}}_\tau ({\bm z}_j), {\mathcal{P}}_\tau ({\bm z}'_j)), \qquad \forall {\bm z}_j,{\bm z}'_j\ne \bm{0}, \ 
\\
\text{ and } \ \sphericalangle({\bm z}_j,{\bm z}'_j)& = \sphericalangle ((J_{\tau}({\bm z}))_j, (J_{\tau}({\bm z}'))_j),\qquad \forall {\bm z}_j,{\bm z}'_j \not\in B_2(\bm{0},\tau). 
\end{aligned}
\end{gather}

Since $J_\tau$ is component-wise firmly nonexpansive
\begin{align*}
\| \,& {\bm x}_j^{k+1} - {\bm x}_j^*\|_{2}^2 
= \| { (J_{\tau} (G_{\tau} ({\bm x}^{k})))_j} - { (J_{\tau} ( G_{\tau} ({\bm x}^*)))_j} \|_{2}^2 \\
& \le \| (G_{\tau} ({\bm x}^{k}))_j - (G_{\tau} ({\bm x}^*))_j \|_{2}^2 \\
&    \qquad \qquad - \| { ((I - J_{\tau}) G_{\tau} ({\bm x}^{k}))_j } - { ((I - J_{\tau}) G_{\tau} ({\bm x}^*))_j} \|_{2}^2 \\
& = \| (G_{\tau} ({\bm x}^{k}))_j - (G_{\tau} ({\bm x}^*))_j \|_{2}^2 - \| {\mathcal{P}}_\tau ( (G_{\tau} ({\bm x}^{k}))_j) - {\mathcal{P}}_\tau ( (G_{\tau} ({\bm x}^*))_j) \|_{2}^2, \numberthis \label{eq:component-wise_FNE}
\end{align*}
for each $j\in [N]$ and $k\in{\mathbb{N}}_0$. 
Let
\begin{align}
\label{eq:c_j_k}
c_j^{k} & := \| {\mathcal{P}}_\tau ( (G_{\tau} ({\bm x}^{k}))_j) - {\mathcal{P}}_\tau ( (G_{\tau} ({\bm x}^*))_j) \|_{2}^2,
\qquad\qquad    \bar{c}^{k} := \sum_{j\in[N]} c_j^{k}.
\end{align}
From the nonexpansiveness of $G_\tau$, summing \eqref{eq:component-wise_FNE} over $j\in[N]$ and iterating, it follows
\begin{align*}
\|{\bm x}^{k+1} - {\bm x}^* \|_{2,2}^2  & \le \|G_{\tau} ({\bm x}^{k}) - G_{\tau} ({\bm x}^{*}) \|_{2,2}^2 - \bar{c}^{k} \le\| {\bm x}^{k} - {\bm x}^* \|_{2,2}^2 - \bar{c}^{k} \\
 & \le 
 \cdots \le \| {\bm x}^{0} - {\bm x}^* \|_{2,2}^2 - \sum_{\ell = 0}^{k} \bar{c}^{\ell}. \numberthis \label{eq:FNE_iteration_ident} 
\end{align*}
Since $\|{\bm x}^{k+1} - {\bm x}^* \|_{2,2}^2\ge 0$, it follows that $\sum_{\ell = 0}^k \bar{c}^\ell \le \|{\bm x}^0 - {\bm x}^*\|_{2,2}^2$. However, the right hand side is independent of $k$, so that taking limits gives $\sum_{\ell=0}^\infty \bar{c}^\ell \le \|{\bm x}^0 - {\bm x}^*\|_{2,2}^2$. Hence $\bar{c}^k \to 0$ as $k\to\infty$, and, from \eqref{eq:c_j_k}, it follows that for each $j\in[N]$, 
\begin{align}
\label{eq_v6:3}
{\mathcal{P}}_\tau ( (G_{\tau}({\bm x}^k))_j)  \to {\mathcal{P}}_\tau ( (G_{\tau}({\bm x}^*))_j),\ \text{ as }k\to\infty.
\end{align}

 Now, let $j \in  \textnormal{supp}({\bm x}^*)$. Then, from \eqref{eq:J_tau},
\begin{align*}
\bm{0} \neq {\bm x}_j^* = { (J_{\tau}( G_{\tau}({\bm x}^*)))_j} = \frac{ (G_{\tau}({\bm x}^*))_j}{\| (G_{\tau}({\bm x}^*))_j \|_2} \cdot \max \{ \| (G_{\tau}({\bm x}^*))_j \|_2 - \tau , 0 \},
\end{align*}
implying $\| (G_{\tau}({\bm x}^*))_j \|_2 > \tau$, so that $\| {\mathcal{P}}_\tau ( (G_{\tau}({\bm x}^*))_j ) \|_2 = \tau$. For $k\in \mathbb{N}_0$, observe that $\| (G_{\tau}({\bm x}^k))_j \|_2 \le \tau$ yields ${\bm x}_j^{k+1} = (J_{\tau} ( G_{\tau}({\bm x}^k)))_j = \bm{0}$. Therefore, if $\| (G_{\tau}({\bm x}^k))_j \|_2 \le \tau$ holds for infinitely many $k$, there exist infinitely many $k$ such that ${\bm x}_j^{k+1} = \bm{0}$,
contradicting the fact that  ${\bm x}_j^k \rightharpoonup {\bm x}_j^* \ne \bm{0}$ (Lemma \ref{lem:weak_convergence}). This gives $\| (G_{\tau}({\bm x}^k))_j \|_2 > \tau$ for all but finitely many $k\in{\mathbb{N}}_0$.   
Let $K>0$ be such that $\| (G_{\tau}({\bm x}^k))_j \|_2 > \tau$ and ${\bm x}_j^{k+1} \ne \bm{0},\, \forall k\ge K$, we have from \eqref{eq_v6:2} that
\begin{align*}
\sphericalangle({\bm x}_j^{k+1},{\bm x}_j^*) & = \sphericalangle( (J_{\tau} (G_{\tau}({\bm x}^k)))_j , (J_{\tau}( G_{\tau}({\bm x}^*)))_j ) \\
& =  \sphericalangle( {\mathcal{P}}_\tau ( (G_{\tau}({\bm x}^k))_j), {\mathcal{P}}_\tau ( (G_{\tau}({\bm x}^*))_j) ), 
\end{align*}
which, combined with ${\mathcal{P}}_\tau ( (G_{\tau}({\bm x}^k))_j) \to {\mathcal{P}}_\tau ( (G_{\tau}({\bm x}^*))_j )$, implies $\sphericalangle({\bm x}_j^{k},{\bm x}_j^*) \to 0$ as $k\to\infty$.

On the other hand, if $j \in E\setminus \textnormal{supp}({\bm x}^*)$, then ${\bm x}_j^* = \bm{0}$ and from \eqref{eq:L_E_definitions},
\begin{align*}
\| (G_{\tau}({\bm x}^*))_j \|_2 = \|{\bm x}_j^* - \tau (\nabla \phi_{2}({\bm x}^*))_j \|_2 = \tau \| (\nabla \phi_{2}({\bm x}^*))_j  \|_2 = \tau,
\end{align*}
implying  ${\mathcal{P}}_\tau ( (G_{\tau}({\bm x}^*))_j) = (G_{\tau}({\bm x}^*))_j  \ne \bm{0}$.
From \eqref{eq_v6:3}, it follows 
$\sphericalangle( {\mathcal{P}}_\tau ( (G_{\tau}({\bm x}^k))_j),$ ${\mathcal{P}}_{\tau} ( (G_{\tau,j}({\bm x}^*))_j) \to 0 \ \text{ as }k\to\infty. $ 
\end{proof}

\subsection{Strong convergence}

In this section, we establish our main result on strong convergence without strict convexity and compactness assumption. First, we require the following lemma, showing that in general, strong convergence can be implied by weak convergence plus angular convergence.

\begin{lemma}
\label{lem:weak_plus_angular_equals_strong}
Let $\{{\bm z}^k\} \subset {\mathcal{H}}^0$ be such that ${\bm z}^k \rightharpoonup {\bm z}^*$ for some nonzero ${\bm z}^*\in {\mathcal{H}}^0$. Let $\theta^{k} = \sphericalangle ({\bm z}^k, {\bm z}^*)$. If $\theta^{k} \to 0$ as $k\to\infty$, then ${\bm z}^k \to {\bm z}^*$ as $k\to\infty$.
\end{lemma}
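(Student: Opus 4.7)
The space $\cH^0$ is a (real) Hilbert space with inner product $\langle \cdot,\cdot\rangle_2$, so the standard identity
$$
\|\bz^k-\bz^*\|_2^2 = \|\bz^k\|_2^2 - 2\langle \bz^k,\bz^*\rangle_2 + \|\bz^*\|_2^2
$$
reduces the proof to two ingredients: (i) convergence of the inner product $\langle \bz^k,\bz^*\rangle_2 \to \|\bz^*\|_2^2$ and (ii) convergence of the norms $\|\bz^k\|_2 \to \|\bz^*\|_2$. Ingredient (i) is immediate from the weak convergence $\bz^k\rightharpoonup \bz^*$ by testing against the fixed element $\bz^*$. The work is in ingredient (ii), and this is precisely where the angular hypothesis enters.

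For (ii), I first argue that $\bz^k\ne \bm{0}$ for all sufficiently large $k$, so that $\sphericalangle(\bz^k,\bz^*)$ is well-defined eventually. Indeed, by weak lower semicontinuity of the norm, $\liminf_k \|\bz^k\|_2 \ge \|\bz^*\|_2 > 0$, so there is $K_0$ with $\|\bz^k\|_2 \ge \tfrac{1}{2}\|\bz^*\|_2$ for every $k\ge K_0$. For such $k$, the definition of angle gives
$$
\|\bz^k\|_2 \,\|\bz^*\|_2 \cos\theta^k = \langle \bz^k,\bz^*\rangle_2.
$$
Since $\theta^k\to 0$, we have $\cos\theta^k\to 1$, and in particular $\cos\theta^k > 0$ for all $k$ large enough. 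Rearranging and passing to the limit,
$$
\lim_{k\to\infty} \|\bz^k\|_2 \;=\; \lim_{k\to\infty} \frac{\langle \bz^k,\bz^*\rangle_2}{\|\bz^*\|_2 \cos\theta^k} \;=\; \frac{\|\bz^*\|_2^2}{\|\bz^*\|_2\cdot 1} \;=\; \|\bz^*\|_2,
$$
which is ingredient (ii).

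Combining (i) and (ii) in the expansion of $\|\bz^k-\bz^*\|_2^2$ yields $\|\bz^k-\bz^*\|_2^2\to 0$, i.e.\ $\bz^k\to \bz^*$ strongly, as desired.

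\textbf{Main obstacle.} There is no deep difficulty, only the minor bookkeeping that the angle $\sphericalangle(\bz^k,\bz^*)$ is a priori undefined whenever $\bz^k=\bm{0}$. This is why the nonvanishing hypothesis on $\bz^*$ is essential: it forces $\|\bz^k\|_2$ to stay bounded away from $0$ eventually (via weak lower semicontinuity) so that the cosine identity can legitimately be inverted. The whole argument would collapse without the assumption $\bz^*\ne \bm{0}$.
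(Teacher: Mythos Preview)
Your proof is correct and follows essentially the same approach as the paper: reduce to norm convergence via the identity $\|\bz^k-\bz^*\|_2^2 = \|\bz^k\|_2^2 - 2\langle \bz^k,\bz^*\rangle_2 + \|\bz^*\|_2^2$, then extract $\|\bz^k\|_2\to\|\bz^*\|_2$ from the cosine relation $\|\bz^k\|_2\,\|\bz^*\|_2\cos\theta^k=\langle\bz^k,\bz^*\rangle_2$ together with $\cos\theta^k\to 1$ and weak convergence of the inner product. The only cosmetic difference is that the paper avoids dividing by $\cos\theta^k$ by writing $\|\bz^k\|_2-\|\bz^*\|_2=\|\bz^k\|_2(1-\cos\theta^k)+(\|\bz^k\|_2\cos\theta^k-\|\bz^*\|_2)$ and invoking the uniform bound $\|\bz^k\|_2\le M$ from weak convergence, whereas you invert the cosine directly; both are valid and equivalent.
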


\begin{proof}
It is enough to show $\|{\bm z}^k\|_2 \to \|{\bm z}^* \|_2$, since this fact, in combination with ${\bm z}^k \rightharpoonup {\bm z}^*$, implies
\begin{align*}
\|{\bm z}^k - {\bm z}^*\|_2^2 = \|{\bm z}^k\|_2^2 + \|{\bm z}^*\|_2^2 - 2 \langle {\bm z}^k, {\bm z}^* \rangle_2 \to 0 \qquad \text{as } k \to \infty.
\end{align*}
First, the weak convergence gives
\begin{align*}
\|{\bm z}^k\|_2 \cos \theta^{k} = \langle {\bm z}^k , {\bm z}^* \rangle_2/ \|{\bm z}^*\|_2 \to \langle {\bm z}^* , {\bm z}^* \rangle_2 / \|{\bm z}^*\|_2 = \|{\bm z}^*\|_2, \qquad k\to\infty,
\end{align*}
and the angular convergence  $\theta^{k} \to 0$ gives $\cos\theta^{k} \to 1$ as $k\to\infty$. 
On the other hand, the weak convergence also gives $\|{\bm z}^k\|_2 \le M, \ \forall k\in{\mathbb{N}}$ for some $M > 0$. Therefore
\begin{align*}
\|{\bm z}^k\|_2 - \|{\bm z}^*\|_2 = \|{\bm z}^k\|_2 ( 1 - \cos\theta^{k}) + (\|{\bm z}^k\|_2 \cos \theta^{k} - \|{\bm z}^*\|_2) \to 0
\end{align*}
as $k\to\infty$, as desired. 
\end{proof}

With Lemma \ref{lem:weak_plus_angular_equals_strong}, together with the weak and angular convergence established for forward-backward splitting \eqref{eq:forward_backward_iteration} in previous subsections, we are now ready to prove the strong convergence of iterates $\{{\bm x}^k\}$.

\begin{theorem}
\label{thm:strong_convergence}
Let Assumption \ref{ass:tau_bounds} hold and $\{{\bm x}^{k}\}$ be generated by the forward-backward iterations \eqref{eq:forward_backward_iteration} starting from any ${\bm x}^{0}\in {\mathcal{H}}$. Then $\{{\bm x}^{k}\}$ converges strongly to some ${\bm x}^*\in X^*$.
\end{theorem}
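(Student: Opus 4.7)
The plan is to establish row-wise strong convergence $\bx_j^k \to \bx_j^*$ in $\cH^0$ for every $j\in[N]$; since $[N]$ is finite, this immediately yields
\begin{equation*}
\|\bx^k - \bx^*\|_{2,2}^2 = \sum_{j=1}^N \|\bx_j^k - \bx_j^*\|_2^2 \to 0.
\end{equation*}
Using Definition \ref{def:L_E_definitions} and \eqref{eq:supp_of_solns}, I will partition $[N] = L \sqcup \textnormal{supp}(\bx^*) \sqcup (E\setminus\textnormal{supp}(\bx^*))$ and treat each block separately. Testing the weak limit from Lemma \ref{lem:weak_convergence} against vectors supported on a single row shows $\bx_j^k \rightharpoonup \bx_j^*$ in $\cH^0$ for each $j$, so the task reduces to upgrading each row-wise weak limit to a strong one.

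On $L$, Lemma \ref{lem:finite_convergence} directly gives $\bx_j^k = \bx_j^* = \bm{0}$ after finitely many iterations, so strong convergence is immediate. On $\textnormal{supp}(\bx^*)$, the target row $\bx_j^*$ is nonzero; part 1 of Theorem \ref{thm:angular_convergence} yields $\sphericalangle(\bx_j^k,\bx_j^*)\to 0$, and combining this with the row-wise weak convergence noted above, Lemma \ref{lem:weak_plus_angular_equals_strong} upgrades it to $\bx_j^k \to \bx_j^*$ strongly.

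The main obstacle is the index set $E \setminus \textnormal{supp}(\bx^*)$, because now $\bx_j^* = \bm{0}$ and Lemma \ref{lem:weak_plus_angular_equals_strong} cannot be applied directly to the iterates. To circumvent this, I will work instead with the sequence $\{G_{\tau,j}(\bx^k)\}$, whose candidate limit $G_{\tau,j}(\bx^*)$ is nonzero since $\|G_{\tau,j}(\bx^*)\|_2 = \tau$ by the defining condition of $E$; consequently $G_{\tau,j}(\bx^k)\neq\bm{0}$ for all sufficiently large $k$. Weak convergence $G_{\tau,j}(\bx^k) \rightharpoonup G_{\tau,j}(\bx^*)$ is inherited from the row-wise weak convergence together with the weak continuity of the bounded affine map $G_\tau$ (Assumption \ref{ass:tau_bounds}). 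For angular convergence, I use that $\mcP_\tau$ sends every nonzero vector to a positive scalar multiple of itself; hence, whenever both sides are nonzero,
\begin{equation*}
\sphericalangle\bigl(G_{\tau,j}(\bx^k),\,G_{\tau,j}(\bx^*)\bigr) = \sphericalangle\bigl(\mcP_\tau\circ G_{\tau,j}(\bx^k),\,\mcP_\tau\circ G_{\tau,j}(\bx^*)\bigr),
\end{equation*}
and the right-hand angle tends to zero by part 2 of Theorem \ref{thm:angular_convergence} (in fact the stronger statement \eqref{eq_v6:3} gives strong convergence of $\mcP_\tau\circ G_{\tau,j}(\bx^k)$). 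Lemma \ref{lem:weak_plus_angular_equals_strong} then yields $G_{\tau,j}(\bx^k) \to G_{\tau,j}(\bx^*)$ strongly; applying the continuous map $I-\mcP_\tau$ to both sides and using $\bx_j^{k+1} = J_\tau(G_{\tau,j}(\bx^k)) = (I-\mcP_\tau)(G_{\tau,j}(\bx^k))$, we conclude $\bx_j^{k+1} \to J_\tau(G_{\tau,j}(\bx^*)) = \bx_j^* = \bm{0}$ strongly, completing the argument.
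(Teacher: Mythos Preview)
Your proof is correct, and on $L$ and on $\textnormal{supp}(\bx^*)$ it coincides verbatim with the paper's argument. On $E\setminus\textnormal{supp}(\bx^*)$ you take a genuinely different (and slightly more systematic) route: the paper works directly with the nonzero subsequence $\{\bx_j^{k_n}\}$, observes that $\bx_j^{k_n}$ is collinear with $\mcP_\tau\circ G_{\tau,j}(\bx^{k_n-1})$, and then shows by hand that $\|\bx_j^{k_n}\|_2\cos\theta^n\to 0$ together with $\cos\theta^n\to 1$ forces $\|\bx_j^{k_n}\|_2\to 0$, thereby bypassing Lemma~\ref{lem:weak_plus_angular_equals_strong} for this case. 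You instead shift to the auxiliary sequence $\{G_{\tau,j}(\bx^k)\}$, whose limit is nonzero, so that Lemma~\ref{lem:weak_plus_angular_equals_strong} applies uniformly; you then push the strong limit through the continuous map $J_\tau = I-\mcP_\tau$. Your route needs one extra (but easy) ingredient---the weak-to-weak continuity of the bounded affine operator $G_\tau$---while the paper's route avoids this by staying with $\bx_j^k$ throughout. In exchange, your argument reuses Lemma~\ref{lem:weak_plus_angular_equals_strong} for all three blocks and avoids passing to subsequences; the paper's argument is slightly more self-contained. Both are equally rigorous.
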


\begin{proof}
For $j\in L$, Lemma \ref{lem:finite_convergence} shows that ${\bm x}^{k}_j \to {\bm x}^*_j = \bm{0}$ in a finite number of iterations. 
For $j\in \textnormal{supp}({\bm x}^*)$, Theorem \ref{thm:angular_convergence} shows $\sphericalangle({\bm x}_j^{k},{\bm x}_j^*) \to 0$ as $k\to\infty$. Combining with the weak convergence in Lemma \ref{lem:weak_convergence} and the sufficient condition for strong convergence of Lemma \ref{lem:weak_plus_angular_equals_strong}, this also yields ${\bm x}_j^{k} \to {\bm x}_j^*$.

It remains to consider the case $j \in E\setminus \textnormal{supp}({\bm x}^*)$. First, we have
${\bm x}_j^* = \bm{0}$ and 
$
\| (G_{\tau}({\bm x}^*))_j  \|_2 = \|{\bm x}_j^* - \tau (\nabla \phi_{2}({\bm x}^*))_j  \|_2^2 = \tau,
$ implying $\| {\mathcal{P}}_\tau ( (G_{\tau}({\bm x}^*))_j) \|_2 = \tau$.
Let $\{{\bm x}_j^{k_n}\}$ be the subsequence of all nonzero elements of $\{{\bm x}_j^k\}$, it is enough to show ${\bm x}_j^{k_n} \to \bm{0}$. 
Since ${\bm x}_j^{k_n} \ne \bm{0}\ \forall n$, $\theta^n : = \sphericalangle({\bm x}_j^{k_n}, {\mathcal{P}}_\tau ( (G_{\tau}({\bm x}^*))_j)) $ is well-defined, and from Theorem \ref{thm:angular_convergence},
\begin{align*}
\theta^n  = \sphericalangle( {\mathcal{P}}_\tau ( (G_{\tau}({\bm x}^{k_{n}-1}))_j) , {\mathcal{P}}_{\tau} ( (G_{\tau}({\bm x}^*))_j) ) \to 0, \ \text{ as }n\to \infty,
\end{align*}
implying $\cos \theta^n \to 1$.
From the weak convergence property of ${\bm x}_j^k$,
\begin{align*}
\|{\bm x}_j^{k_n} \|_2 \cos\theta^{n} = \langle {\bm x}_j^{k_n} , {\mathcal{P}}_\tau ( (G_{\tau}({\bm x}^*))_j) \rangle_2/\tau \to \langle {\bm x}_j^*, {\mathcal{P}}_\tau ( (G_{\tau}({\bm x}^*))_j) \rangle_2/\tau = 0 .
\end{align*}
Since $\cos\theta^{n}\to 1$, this gives $\|{\bm x}_j^{k_n} \|_2\to 0$, concluding the proof. 
\end{proof}

\subsection{Linear convergence}

\label{sec:lin_convergence}

In this last subsection, we discuss a sufficient condition to establish linear convergence for the forward-backward splitting method in joint sparse recovery, showing that the path to acquire linear convergence is quite routine given a contractive-type property on $G_\tau$. Recall that the sequence $\{\| {\bm x}^{k} - {\bm x}^* \|_{2,2}\}$ converges to zero $q$-linearly if its $q_1$-factor satisfies
\begin{align*}
q_1 := \limsup_{k\to\infty} \frac{\|{\bm x}^{k+1} - {\bm x}^* \|_{2,2}}{\|{\bm x}^{k} - {\bm x}^* \|_{2,2}} < 1.
\end{align*}
In \cite[Section 4.2]{HYZ08}, under an additional assumption imposing the well-conditioning of a ``reduced" Hessian of $\phi_2$, $q$-linear convergence of the forward-backward splitting method was shown for the single vector recovery problem. With the same assumption (stated in Theorem \ref{thm:fpc_linear_convergence_result}), we are able to establish the linear convergence for our considered joint sparse problem. This assumption aims to make the forward operator $G_\tau$ a contraction. However, as one already has the finite convergence on $L$, roughly speaking, this operator only needs to be a contraction on $E$, explaining why the well-conditioning of a submatrix of ${\bm H}$ associated with $E$ is sufficient. 

For a matrix $\bm{A}$, we define $\lambda_{\max}(\bm{A})$ and $\lambda_{\min}(\bm{A})$ as the maximum and minimum eigenvalues of $\bm{A}$. Let us define
\begin{align}
\label{eq:tau_of_lambda}
\tau(\lambda) := \frac{\gamma(\lambda)}{\gamma(\lambda)+1} \frac{2}{\lambda_{\max}({\bm H})}, \qquad \gamma(\lambda):= \frac{\lambda_{\max}({\bm H})}{\lambda}, \qquad \text{for } \lambda >0.
\end{align}
Then $\tau(\lambda) \in (0,2/\lambda_{\max}({\bm H}))$, since $\gamma(\lambda)>0$.  By ${\bm H}_{EE}$ we define the submatrix of ${\bm H}$ formed by the rows and columns associated with $E$. We have the following theorem, whose proof follows the arguments in \cite[Theorem 4.10]{HYZ08} and is skipped here for brevity. 

\begin{theorem}
\label{thm:fpc_linear_convergence_result}
Let Assumption \ref{ass:tau_bounds} hold, and assume that 
\begin{align}
\lambda_{\min}^E := \lambda_{\min} ({\bm H}_{EE}) > 0.
\end{align}
Then the sequence $\{{\bm x}^{k}\}$ generated by the fixed-point iterations \eqref{eq:forward_backward_iteration} converges to ${\bm x}^*\in X^*$ $q$-linearly. Moreover, if $\tau$ is chosen as in \eqref{eq:tau_of_lambda} with $\lambda = \lambda_{\min}^E$, then the $q_1$-factor satisfies
\begin{align}
\label{eq:q_1_factor_bound}
q_1 \leq \frac{\gamma(\lambda_{\min}^E)-1}{\gamma(\lambda_{\min}^E)+1}.
\end{align}
\end{theorem}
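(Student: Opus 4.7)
The plan is to combine the finite-step identification of $L$ from Lemma \ref{lem:finite_convergence} with a contractive estimate on $E$ derived from the assumed positive definiteness of $\bH_{EE}$. First, Lemma \ref{lem:finite_convergence} supplies an integer $K \ge 0$ such that $\bx_j^k = \bx_j^* = \bm{0}$ for every $j \in L$ and every $k \ge K$; consequently, for $k \ge K$ the row-support of $\bx^k - \bx^*$ is contained in $E$, and
\begin{equation*}
\|\bx^{k+1} - \bx^*\|_{2,2}^2 = \sum_{j \in E} \|\bx^{k+1}_j - \bx^*_j\|_2^2.
\end{equation*}

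The next step exploits the row-wise nonexpansiveness of $J_\tau$ (a direct consequence of \eqref{eq:J_tauj_FNE}) together with the linearity of the forward operator. Since $G_\tau(\bx^k) - G_\tau(\bx^*) = (I - \tau \bH)(\bx^k - \bx^*)$ and $(\bx^k - \bx^*)_L = \bm{0}$, the $E$-block of this linear difference collapses to
\begin{equation*}
[G_\tau(\bx^k) - G_\tau(\bx^*)]_E = (I - \tau \bH_{EE})(\bx^k - \bx^*)_E.
\end{equation*}
Summing the row-wise nonexpansive estimates over $j \in E$ then yields
\begin{equation*}
\|\bx^{k+1} - \bx^*\|_{2,2}^2 \le \|(I - \tau \bH_{EE})(\bx^k - \bx^*)_E\|_{2,2}^2 \le \|I - \tau \bH_{EE}\|_2^2 \cdot \|\bx^k - \bx^*\|_{2,2}^2,
\end{equation*}
where the last inequality uses the standard bound $\|A X\|_{2,2} \le \|A\|_2 \|X\|_{2,2}$ applied column-by-column. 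Since $\bH_{EE}$ is a principal submatrix of $\bH$, $\lambda_{\max}(\bH_{EE}) \le \lambda_{\max}(\bH)$, so Assumption \ref{ass:tau_bounds} together with $\lambda_{\min}^E > 0$ forces the spectrum of $I - \tau \bH_{EE}$ into $(-1,1)$, yielding $\|I - \tau \bH_{EE}\|_2 < 1$ and hence $q$-linear convergence with $q_1 \le \|I - \tau \bH_{EE}\|_2$.

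For the explicit bound \eqref{eq:q_1_factor_bound}, I would substitute $\tau = \tau(\lambda_{\min}^E) = 2/(\lambda_{\max}(\bH) + \lambda_{\min}^E)$ and evaluate $|1 - \tau \lambda|$ at the extreme points of the spectrum. A direct computation yields $|1 - \tau \lambda_{\min}^E| = |1 - \tau \lambda_{\max}(\bH)| = (\gamma(\lambda_{\min}^E) - 1)/(\gamma(\lambda_{\min}^E) + 1)$, and since $\lambda \mapsto |1 - \tau \lambda|$ is maximized over any subinterval of $[\lambda_{\min}^E, \lambda_{\max}(\bH)]$ at one of its endpoints, the same bound applies to every eigenvalue of $\bH_{EE} \subseteq [\lambda_{\min}^E, \lambda_{\max}(\bH)]$, giving \eqref{eq:q_1_factor_bound}. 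The main subtlety lies in the reduction from $\bH$ to $\bH_{EE}$: one must carefully combine the row-wise structure of $J_\tau$ (which confines the error to rows in $E$ once $k \ge K$) with the vanishing of $(\bx^k - \bx^*)_L$ (which makes the action of $\bH$ on the residual depend only on the $\bH_{EE}$ block). Without both ingredients together, a spectral hypothesis restricted to a submatrix would not translate into genuine contraction of the forward operator on the iterates.
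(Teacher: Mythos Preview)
Your argument is correct and is precisely the route the paper indicates: the paper does not give a proof but states that it ``follows the arguments in \cite[Theorem 4.10]{HYZ08},'' and your sketch is exactly that adaptation---finite identification on $L$ via Lemma \ref{lem:finite_convergence}, row-wise nonexpansiveness of $J_\tau$ to reduce to the $E$-rows, then the spectral contraction of $I-\tau\bH_{EE}$ with the optimal step size computation. There is nothing to add.
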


\section{Concluding remarks}
\label{sec:conclusion}

In this work, we study a forward-backward splitting algorithm for the solution of joint-sparse signal recovery problems, which simultaneously reconstruct a set of sparse signals that are known to share a common sparsity pattern. In such setting, each iteration of the forward-backward algorithm can be viewed as a composition of row-wise soft-thresholding with a step of the standard gradient descent iteration. 
Our analysis shows that this algorithm enjoys the similar strong convergence property that has been shown in single vector recovery \cite{HYZ08}, even in the case that the sets of measurements and of signals to reconstruct are infinite. The major theoretical contribution of this paper, therefore, is a proof of strong convergence of forward-backward splitting method without strict convexity and compactness assumptions. 

Applications which fit this model arise in imaging, data analysis, sensor arrays, and the approximation of high-dimensional parameterized PDEs. In solving parameterized PDEs, when combined with the standard compressed sensing scheme for polynomial approximation, the benefits of joint sparse approach are manifold. First, it enables simultaneous, global (rather than pointwise) approximations of the solution in the physical space. As such, this approach exploits the joint sparsity structure and provably requires fewer samples than in the case of reconstructing multiple single vectors, as demonstrate in \cite{EldarRauhut10,DaviesEldar12}. 
In addition, joint-sparse recovery of the PDE solutions relies on the decay of the polynomial coefficients and {a priori} estimates of the truncation error in global energy norms, which are well established in the existing literature, see, e.g., \cite{CDS11,CCS14,TranWebsterZhang14}. These advantages make joint-sparse approach an attractive alternative for the solution of high-dimensional parameterized PDEs; and we have documented this study in \cite{DexterTranWebster17}.

\paragraph{Acknowledgements}
The first author acknowledges the support of the Pacific Institute of Mathematical Sciences (PIMS).
The second and third authors acknowledge support from: the U.S. Department of Energy, Office of Science, Office of Advanced Scientific Computing Research, Applied Mathematics program under contracts and awards ERKJ314, ERKJ331, ERKJ345, and Scientific Discovery through Advanced Computing (SciDAC) program through the FASTMath Institute under Contract No. DE-AC02-05CH11231; and by the Laboratory Directed Research and Development program at the Oak Ridge National Laboratory, which is operated by UT-Battelle, LLC., for the U.S. Department of Energy under contract DE-AC05-00OR22725.

\bibliographystyle{amsplain}
\bibliography{SVAA_refs} 

\end{document}